\newtheorem{thm}{Theorem}[section]
\newtheorem{cor}[thm]{Corollary}
\newtheorem{prop}[thm]{Proposition}
\newtheorem{rem}{Remark}[section]
\newenvironment{proof}{{\bf Proof:\ }}
{\hfill $\Box$}
\newcommand\XX[1]{\mathbb{#1}}
\begin{document}
\begin{frontmatter}
\title{Zeros of polynomials orthogonal with respect to a signed weight}
%%%%%%%%%%%%%%%%%%%%%%%%%%%%%%%%%%%%%%%%%%%%%%%%%%%
\author[BE]{M. Benabdallah}
\ead{benabdallahmajed@gmail.com}
\author[AT]{M. J. Atia \corref{cor1}}
\ead{jalel.atia@gmail.com}
\author[RCS]{R. S. Costas-Santos \fnref{fn1}}
\ead{rscosa@gmail.com}
\ead[url]{http://www.rscosan.com}
\cortext[cor1]{Corresponding author}

\fntext[fn1]{RSCS acknowledges financial
support from the Ministerio de Ciencia e
Innovaci\'on of Spain and under the
program of postdoctoral grants (Programa
de becas postdoctorales) and grant
MTM2009-12740-C03-01.}

\address[AT]{Facult\'e sciences Gabes, cit\'e Erriadh, 6072 Gabes Tunisie.}
\address[BE]{Lyc\'ee Blidet, Blidet Kebili, Tunise}
\address[RCS]{Department of Mathematics, University of
California, South Hall, Room 6607 Santa Barbara, CA 93106 USA}
%%%%%%%%%%%%%%%%%%%%%%%%%%%%%%%%%%%%%%%%%%%%%%%%%%%
\begin{abstract}
\noindent
In this paper we consider the polynomial
sequence $(P_{n}^{\alpha,q}(x))$ that is
orthogonal on $[-1,1]$ with respect to the
weight function $x^{2q+1}(1-x^{2})^{\alpha}(1-x),
\alpha>-1, q\in \XX N$; we obtain the
coefficients of the tree-term recurrence
relation (TTRR) by using a different method
from the one derived in \cite{kn:atia1};
we prove that the interlacing property does
not hold properly for $(P_n^{\alpha,q}(x))$;
and we also prove that, if $x_{n,n}^{\alpha+i,q+j}$
is the largest zero of $P_{n}^{\alpha+i,q+j}(x)$,
$\displaystyle x_{2n-2j,2n-2j}^{\alpha+j,q+j}<
x_{2n-2i,2n-2i}^{\alpha+i,q+i}, 0\leq i<j\leq n-1$.
\end{abstract}
\begin{keyword}
Zeros\sep Real-rooted polynomials \sep
Generalized Gegenbauer polynomials

\MSC[2010]{Primary 33C05\sep 33C20\sep 42C05 \sep 30C15}
\end{keyword}

\end{frontmatter}

%%%%%%%%%%%%%%%%%%%%%%%%%%%%%%%%%%%%%%%%%%%%%%%%%%%
\section{Introduction}
It is a well known fact that if $(p_n)$ is
orthogonal with respect to a (real) weight
function, namely $w(x)$, and such weight function
is positive on $[a,b]$, then the zeros of $p_n$ are
real, distinct, interlace, and lie inside $]a,b[$,
but such interlacing property is no longer valid
when the weight is a signed function.
In fact, Perron \cite{kn:perron} proved that when
the $w(x)$ changes sign once then one of zeros can
lie outside of $[a,b]$.

In this paper we prove that such zero can lie into
one of the endpoints of the interval, $a$ or $b$.
We consider the weight function $w(x)=x^{2q+1}
(1-x^{2})^{\alpha}(1-x)$, $\alpha>-1$, $q\in \XX N$
that changes sign once, at $x=0$, and we prove that
all the zeros are real, non interlacing, and that
one of the zeros is the endpoint $a=-1$.

A sequence of monic orthogonal polynomials
$(\hat Q_{n})$ satisfies for $n\ge 0$ the
following TTRR \cite{kn:chihara}:
\begin{equation}\label{TTRR}
\hat Q_{n+2}(x)=(x-\beta_{n+1})\hat Q_{n+1}(x)
-\gamma_{n+1}\hat Q_{n}(x),
\end{equation}
with initial conditions $\hat Q_{0}(x)=1$, $\hat
Q_{1}(x)=x-\beta_0$, being $(\beta_n)$ and $(\gamma_n)$ the
coefficients of the recurrence relation. The recurrence
coefficients of $(P_{n}^{\alpha,q})$ were calculated in
\cite{kn:atia1} by using the Laguerre-Freud equations, and, later
on, an explicit expression for $P_{n}^{\alpha,q}(x)$ was given in
\cite{kn:atia2}. The main aim of this paper is to keep studying
these polynomials, more precisely, the behavior of the zeros of
$(P_{n}^{\alpha,q}(x))$.

People working on zeros of orthogonal polynomials
know how difficult is to explore this area.
In fact, even in the case of Jacobi polynomials
results on zeros are presented as conjectures
(see \cite{kn:gautshi1}, \cite{kn:gautshi2}).

In order to do this study, we use generalized
Gegenbauer polynomials $(GG_{n}^{\alpha,\mu})$
that are orthogonal on $[-1,1]$ with respect to
the weight function $|x|^\mu(1-x^{2})^{\alpha}$,
$\alpha>-1$, $\mu>-1$.
Some properties of GG-polynomials can be found
in \cite{kn:szego} and \cite{kn:atia5}.

The structure of the paper is the following:
in Section 2 we present basic definitions, some
notation, and a few preliminary results, in
Section 3 we obtain some algebraic relations
between $(P_n^{\alpha,q}(x))$ and the
GG-polynomials as well as the recurrence
coefficients of the TTRR fulfilled by
$(P_n^{\alpha,q}(x))$, and in Section 4 some
results regarding zeros of $(P_n^{\alpha,q}(x))$
are given.
%%%%%%%%%%%%%%%%%%%%%%%%%%%%%%%%%%%%%%%%%%%%%%%%%%%
\section{Basic definitions and
preliminary results}
The \textit{Pochhammer symbol}, or shifted
factorial, is defined as
\begin{equation}\label{pochammer}
(\alpha)_{0}=1, \quad (\alpha)_{n}=\alpha(\alpha+1)
\cdots(\alpha+n-1),\qquad n\geq1.
\end{equation}
The \textit{Gauss's hypergeometric function} is
\begin{equation}\label{fnhyperg}
_2F_{1}(\alpha,\beta;\gamma;z)=\sum_{n=0}^{\infty}
\frac{(\alpha)_{n}(\beta)_{n}}{n!(\gamma)_{n}}
z^{n}, \quad \alpha, \beta\in\mathbb{C};\gamma\in
\mathbb{C}\backslash\mathbb{Z}_-; |z|<1.
\end{equation}
When $\alpha$, or $\beta$, is a negative integer
the hypergeometric serie \eqref{fnhyperg}
terminates, i.e. it reduces to a polynomial (of
degree $\alpha$, or $\beta$ resp.).

Observe that after straightforward calculation one
gets
\begin{equation}\label{fnhypergpolyno}
_{2}F_{1}(-m,\beta;\gamma;z)=\sum_{n=0}^{m}
\frac{(-m)_{n}(\beta)_{n}}
{n!(\gamma)_{n}}z^{n}=\sum_{n=0}^{m}
\binom m n\frac{(\beta)_{n}}
{(\gamma)_{n}}(-z)^{n}.
\end{equation}
Denoting by
$$
_{2}F_{1}(\alpha,\beta;\gamma;z)\equiv F,
\quad _{2}F_{1}(\alpha\pm1,\beta;\gamma;z)
\equiv F(\alpha\pm1),
$$
$$
_{2}F_{1}(\alpha,\beta\pm1;\gamma;z)\equiv
F(\beta\pm1),\quad _{2}F_{1}(\alpha,\beta;
\gamma\pm;z)\equiv F(\gamma\pm1).
$$
The functions $F(\alpha\pm1)$, $F(\beta\pm1)$,
and $F(\gamma\pm1)$ are said to be {\it contiguous}
of $F$ \cite[p. 242]{kn:lebedev}.
Among the relations of this type we cite the
following ones:
\begin{equation}\label{contig1}
(\gamma-\alpha-\beta)F+\alpha(1-z)F(\alpha+1)
-(\gamma-\beta)F(\beta-1)=0,
\end{equation}
\begin{equation}\label{contig2}
(\gamma-\alpha-1)F+\alpha F(\alpha+1)-(\gamma-1)
F(\gamma-1)=0,
\end{equation}
\begin{equation}\label{contig3}
\gamma(1-z)F-\gamma F(\alpha-1)+(\gamma-\beta)z
F(\gamma+1)=0,
\end{equation}
\begin{equation}\label{contig4}
(\alpha-\beta)F-\alpha F(\alpha+1)+\beta
F(\beta+1)=0,
\end{equation}
\begin{equation}\label{contig5}
(\alpha-\beta)(1-z)F+(\gamma-\alpha)F(\alpha-1)
-(\gamma-\beta)F(\beta-1)=0.
\end{equation}
The following result allow us to find new
relations between the the polynomial sequence
$(P_n^{\alpha,q}(x))$ and the GG-polynomials:
\begin{prop} \label{prop2.1}
GG polynomials are
related by the following relation
\begin{equation}\label{relationpmu}
GG_{2n+1}^{\alpha,\mu}(x)=xGG_{2n}^{\alpha,
\mu+2}(x),\ n\geq0.
\end{equation}
\end{prop}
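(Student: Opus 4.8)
The plan is to characterize the right-hand side through orthogonality and exploit the parity of the weight. First I would record the structural fact underlying everything: since $w_\mu(x)=|x|^{\mu}(1-x^2)^{\alpha}$ is even in $x$, the associated monic orthogonal family has a definite parity, $GG_n^{\alpha,\mu}(-x)=(-1)^n GG_n^{\alpha,\mu}(x)$. In particular $GG_{2n+1}^{\alpha,\mu}$ is an odd polynomial, hence divisible by $x$, while $GG_{2n}^{\alpha,\mu+2}$ is even; note also that $\mu>-1$ gives $\mu+2>-1$, so the polynomial $GG_{2n}^{\alpha,\mu+2}$ appearing on the right is well defined.

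Next I would set $R(x):=x\,GG_{2n}^{\alpha,\mu+2}(x)$, which is a monic polynomial of degree $2n+1$, and show that it satisfies the same orthogonality relations as $GG_{2n+1}^{\alpha,\mu}$ with respect to $w_\mu$; that is, $\int_{-1}^{1}R(x)\,x^{k}\,w_\mu(x)\,dx=0$ for $0\le k\le 2n$. I would split this into two cases according to the parity of $k$. When $k$ is even, the integrand is an odd function (the odd power $x^{k+1}$ multiplied by the even factors $GG_{2n}^{\alpha,\mu+2}$ and $w_\mu$), so the integral over the symmetric interval $[-1,1]$ vanishes automatically.

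The substantive case is $k$ odd. Here I would write $k+1=2m$ with $1\le m\le n$ and absorb one factor $x^{2}$ into the weight, using $x^{k+1}|x|^{\mu}=x^{2(m-1)}\,|x|^{\mu+2}$, so that the integral becomes
\[
\int_{-1}^{1} GG_{2n}^{\alpha,\mu+2}(x)\,x^{2(m-1)}\,|x|^{\mu+2}(1-x^2)^{\alpha}\,dx,
\]
in which the polynomial factor $x^{2(m-1)}$ has degree $2(m-1)\le 2n-2<2n$. This integral vanishes by the defining orthogonality of $GG_{2n}^{\alpha,\mu+2}$ against all polynomials of degree below $2n$ with respect to its own weight $|x|^{\mu+2}(1-x^2)^{\alpha}$.

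Finally, since $R$ is a monic polynomial of degree $2n+1$ orthogonal to every polynomial of degree at most $2n$ with respect to $w_\mu$, the uniqueness of monic orthogonal polynomials forces $R=GG_{2n+1}^{\alpha,\mu}$, which is exactly the asserted identity. I expect the only delicate point to be the bookkeeping of the factor $|x|^{\mu}$ under multiplication by $x$ and $x^{2}$: the step that makes the argument close is that multiplying by the even quantity $x^{2}$ raises the exponent of $|x|$ cleanly from $\mu$ to $\mu+2$, precisely the shift in the statement. I would also verify that both families are taken monic, so that the identity holds with constant exactly $1$ rather than merely up to a nonzero scalar.
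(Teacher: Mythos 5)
Your proof is correct, but it follows a genuinely different route from the paper. The paper's own proof is a one-line verification from the explicit hypergeometric representations: substituting $\mu\mapsto\mu+2$ in the formula
$GG_{2n}^{\alpha,\mu}(x)=x^{2n}\,{}_2F_1\bigl(-n,-n-\tfrac{\mu}{2}+\tfrac12;-2n-\alpha-\tfrac{\mu}{2}+\tfrac12;\tfrac{1}{x^2}\bigr)$
shifts both the second numerator parameter and the denominator parameter by $-1$, which reproduces exactly the ${}_2F_1$ appearing in the formula for $GG_{2n+1}^{\alpha,\mu}(x)$, so multiplying by $x$ gives the identity immediately. You instead characterize $R(x)=x\,GG_{2n}^{\alpha,\mu+2}(x)$ through orthogonality: the even-$k$ moments vanish by parity, the odd-$k$ moments vanish after absorbing $x^{2}$ into the weight to raise $|x|^{\mu}$ to $|x|^{\mu+2}$, and uniqueness of the monic orthogonal polynomial of degree $2n+1$ closes the argument. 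Your version is self-contained (it needs no explicit formula for the GG polynomials, only existence and uniqueness of the monic orthogonal family for a positive integrable weight, which $\mu>-1$, $\alpha>-1$ guarantee), and it is in fact the same technique the paper itself deploys in Section 3 to relate $(P_n^{\alpha,q})$ to the GG-polynomials; the paper's proof is shorter but presupposes the hypergeometric representations \eqref{gg2n} and \eqref{gg2np1}. Both arguments are complete and correct.
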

\begin{proof} This comes directly from
\begin{equation}\label{gg2n}
GG_{2n}^{\alpha,\mu}(x)=x^{2n}\ _{2}F_{1}
\Big(-n,-n-\frac{\mu}{2}
+\frac{1}{2};-2n-\alpha-\frac{\mu}{2}
+\frac{1}{2};\frac{1}{x^{2}}\Big),\ n\geq0,
\end{equation}
and
\begin{equation}\label{gg2np1}
GG_{2n+1}^{\alpha,\mu}(x)=x^{2n+1}\
_{2}F_{1}\Big(-n,-n-\frac{\mu}{2}
-\frac{1}{2};-2n-\alpha-\frac{\mu}{2}
-\frac{1}{2};\frac{1}{x^{2}}\Big),\ n\geq0.
\end{equation}
\end{proof}

The following result characterizes
the polynomial sequence, up to a constant,
through its property of orthogonality:
\begin{thm}\cite{kn:lebedev} \label{theo2.2}
Let $(p_n)$ be a sequence of polynomials.
The following statements are equivalent:
\par $(a)$\ $(p_n)$ is a orthogonal
polynomial sequence with respect to the weight
$w(x)$ on $[a,b]$.
\par$(b)$ $\displaystyle
\int_{a}^{b}w(x) \pi(x) p_{n}(x)dx=0$ for
every polynomial $\pi$ of degree $m<n$ and
$\displaystyle\int_{a}^{b}w(x) \pi(x) p_{n}(x)dx
\neq0$ if $m=n$.
\par$(c)$ $\displaystyle\int_{a}^{b}w(x) x^{m}
p_{n}(x)dx=k_{n}\delta_{m,n}$ with $k_{n}\neq0$,
$0\leq m\leq n$.
\end{thm}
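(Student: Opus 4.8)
The plan is to establish the three-way equivalence by proving the cyclic chain of implications $(a)\Rightarrow(b)\Rightarrow(c)\Rightarrow(a)$. The one structural fact that underlies every step is that, since $(p_n)$ is a sequence of polynomials with $\deg p_n=n$, the family $\{p_0,p_1,\dots,p_m\}$ is a basis of the space of polynomials of degree at most $m$; equivalently, the monomial basis $\{1,x,\dots,x^m\}$ and $\{p_0,\dots,p_m\}$ are linked by an invertible (triangular) change of basis. I would record this as a preliminary observation and then invoke it repeatedly, so that each implication reduces to linearity of the integral.

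For $(a)\Rightarrow(b)$, I would take an arbitrary polynomial $\pi$ with $\deg\pi=m<n$, expand $\pi=\sum_{k=0}^{m}c_k p_k$, and integrate against $w\,p_n$; each term vanishes by orthogonality because $k\le m<n$, giving the first assertion. For $m=n$ I would instead write $\pi=c_n p_n+\sum_{k<n}c_k p_k$ with $c_n\neq 0$ (forced by $\deg\pi=n$), so that $\int_a^b w\,\pi\,p_n\,dx=c_n\int_a^b w\,p_n^2\,dx\neq 0$ by the nondegeneracy of the sequence, namely the non-vanishing of $\int_a^b w\,p_n^2\,dx$. The implication $(b)\Rightarrow(c)$ is then immediate: specializing $\pi(x)=x^m$ for $0\le m\le n$ turns the two clauses of $(b)$ into exactly $\int_a^b w\,x^m p_n\,dx=k_n\delta_{m,n}$ with $k_n\neq 0$.

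For the return implication $(c)\Rightarrow(a)$, I would fix $m\neq n$ and, assuming without loss of generality $m<n$, expand $p_m=\sum_{j=0}^{m}d_j x^j$; then $\int_a^b w\,p_m p_n\,dx=\sum_{j=0}^m d_j\int_a^b w\,x^j p_n\,dx=0$ by $(c)$, since every index $j\le m<n$. This yields orthogonality for all distinct pairs, while the non-vanishing norm $\int_a^b w\,p_n^2\,dx\neq 0$ follows by expanding $p_n$ in monomials and isolating its top-degree contribution against $k_n$. Together these recover statement $(a)$.

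The step I expect to require the most care is the bookkeeping around the degree hypothesis in $(c)\Rightarrow(a)$. The clean change-of-basis argument needs $\deg p_k=k$ for every $k$ so that the transition matrix is triangular and invertible, and in passing from the moment condition to the full orthogonality one must confirm that $\int_a^b w\,x^n p_n\,dx\neq 0$ is compatible with, and in fact pins down, a nonzero leading coefficient of $p_n$. Making this degree control explicit, rather than assuming it tacitly, is the only delicate point; once it is in place, all three implications are routine consequences of the linearity of the integral and the basis property.
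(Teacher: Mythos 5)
The paper does not prove this theorem at all: it is quoted as a classical result with a citation to the literature (it is essentially Theorem~1.4 of Chihara's book, restated for a weight on $[a,b]$), so there is no in-paper argument to compare against. Your proof is the standard textbook one and is correct: the cyclic chain $(a)\Rightarrow(b)\Rightarrow(c)\Rightarrow(a)$ via the triangular change of basis between $\{1,x,\dots,x^m\}$ and $\{p_0,\dots,p_m\}$, plus linearity of the integral, is exactly how this equivalence is established. One small sharpening of the point you already flag: in $(c)\Rightarrow(a)$ the condition $\int_a^b w(x)x^np_n(x)\,dx=k_n\neq0$ does not by itself force $\deg p_n=n$ (for a signed weight a low-degree $p_n$ could still have a nonzero $n$-th moment); the degree statement has to be extracted by induction, e.g.\ by showing that if $\deg p_n<n$ then $p_n$ lies in the span of $p_0,\dots,p_{n-1}$ and each coefficient in that expansion is killed by $\int_a^b w\,p_jp_n\,dx=0$ together with $\int_a^b w\,p_j^2\,dx\neq0$, forcing $p_n=0$ and contradicting $k_n\neq0$. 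With that induction made explicit your argument is complete.
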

\begin{cor}
Let $(p_{n})$ be a orthogonal polynomial sequence
with respect to the weight $w(x)$ on $[a,b]$ and
let $(Q_{n})$ be another polynomial
sequence that fulfills the following property
of orthogonality:
\begin{equation}\label{ortho1}
\left\{
  \begin{array}{ll}
  \displaystyle  \int_{a}^{b}w(x)x^{k} Q_{n+r}(x)dx=0,
 & 0\leq k\leq n-1, \\[3mm]
   \displaystyle \int_{a}^{b}w(x)x^{n}
 Q_{n+r}(x)dx\neq 0, & n\geq 0,
  \end{array}
\right.
\end{equation}
then
\begin{equation}\label{ortho2}
Q_{n+r}(x)=\sum_{i=n}^{n+r}\lambda_{i}p_{i}(x),
\quad(\lambda_{i})\in\mathbb{C},\ n,r\in\mathbb{N}.
\end{equation}
\end{cor}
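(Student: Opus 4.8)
The plan is to expand $Q_{n+r}$ in the orthogonal basis $(p_i)$ and then read off, directly from the orthogonality hypotheses, which coefficients must vanish. Since $(p_i)$ is an orthogonal polynomial sequence we have $\deg p_i=i$, so $\{p_0,p_1,\dots,p_{n+r}\}$ is a basis of the space of polynomials of degree at most $n+r$. Because $(Q_{n+r})$ is a polynomial sequence indexed by its degree, $\deg Q_{n+r}=n+r$, and hence there exist scalars $\lambda_0,\dots,\lambda_{n+r}$ with
$$
Q_{n+r}(x)=\sum_{i=0}^{n+r}\lambda_i p_i(x).
$$
The whole task then reduces to showing $\lambda_0=\dots=\lambda_{n-1}=0$.

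The key observation is that, since $\deg p_j=j$, the families $\{1,x,\dots,x^{n-1}\}$ and $\{p_0,\dots,p_{n-1}\}$ span the same subspace, namely the polynomials of degree at most $n-1$. Consequently the hypothesis $\int_a^b w(x)x^k Q_{n+r}(x)\,dx=0$ for $0\le k\le n-1$ is equivalent to
$$
\int_a^b w(x)p_j(x)Q_{n+r}(x)\,dx=0,\qquad 0\le j\le n-1.
$$
I would substitute the expansion of $Q_{n+r}$ into the left-hand side and invoke the orthogonality of $(p_i)$ from Theorem~\ref{theo2.2} to annihilate every cross term, which leaves
$$
\int_a^b w(x)p_j(x)Q_{n+r}(x)\,dx=\lambda_j\int_a^b w(x)p_j(x)^2\,dx.
$$
Since $\int_a^b w(x)p_j(x)^2\,dx\neq0$, the vanishing of each of these integrals forces $\lambda_j=0$ for $0\le j\le n-1$, and the representation $Q_{n+r}(x)=\sum_{i=n}^{n+r}\lambda_i p_i(x)$ follows at once.

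There is no genuine obstacle in this argument; the only points demanding care are the span equivalence that converts orthogonality against the monomials $x^k$ into orthogonality against the $p_j$, and the index bookkeeping. In particular, the upper index $n+r$ comes from $\deg Q_{n+r}=n+r$, while the second hypothesis $\int_a^b w(x)x^n Q_{n+r}(x)\,dx\neq0$ is not needed for the expansion itself: it serves only to guarantee that the surviving leading coefficient $\lambda_n$ is nonzero, so that the lower summation index is genuinely $n$ and not larger.
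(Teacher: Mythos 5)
Your proof is correct; the paper in fact states this corollary without any proof (it is presented as an immediate consequence of Theorem~\ref{theo2.2}), and your argument --- expanding $Q_{n+r}$ in the basis $\{p_0,\dots,p_{n+r}\}$, converting orthogonality against $x^k$ for $0\le k\le n-1$ into orthogonality against $p_0,\dots,p_{n-1}$, and using $\int_a^b w(x)p_j^2(x)\,dx\neq 0$ to force $\lambda_0=\dots=\lambda_{n-1}=0$ --- is exactly the standard one the authors evidently had in mind. Your closing observation that the second hypothesis is only needed to guarantee $\lambda_n\neq 0$ (so the sum genuinely starts at $i=n$) is also accurate.
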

\section{Algebraic relations between $(P_n^{\alpha,
q}(x))$ and the GG-po\-ly\-no\-mials}
\begin{prop}
For any $n\ge 0$, and any integer $q$ the
following identities hold:
\begin{equation}\label{p2ngg2n}
P_{2n}^{\alpha,q}(x)=GG_{2n}^{\alpha,2q+2}(x),
\end{equation}

\begin{equation}\label{p2np1gg2n}
P_{2n+1}^{\alpha,q}(x)=(1+x) GG_{2n}^{\alpha+1,
2q+2}(x).
\end{equation}
\end{prop}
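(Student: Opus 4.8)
The plan is to establish each of the two identities~\eqref{p2ngg2n} and~\eqref{p2np1gg2n} by exploiting the characterization of orthogonal polynomials given in Theorem~\ref{theo2.2}, together with the explicit representation of the GG-polynomials. Recall that $(P_n^{\alpha,q})$ is orthogonal on $[-1,1]$ with respect to $w(x)=x^{2q+1}(1-x^2)^\alpha(1-x)$, while $(GG_n^{\alpha,\mu})$ is orthogonal with respect to $|x|^\mu(1-x^2)^\alpha$. The key observation is that, for the even-indexed case, the sign change of $w$ at $x=0$ is absorbed by the even power of $x$ appearing when one tests against even monomials, so I would show that $GG_{2n}^{\alpha,2q+2}$ satisfies exactly the orthogonality conditions that pin down $P_{2n}^{\alpha,q}$ up to a constant, and then fix the constant by comparing leading coefficients.

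For~\eqref{p2ngg2n}, first I would write the defining orthogonality of $GG_{2n}^{\alpha,2q+2}$, namely $\int_{-1}^1 |x|^{2q+2}(1-x^2)^\alpha\,x^m\,GG_{2n}^{\alpha,2q+2}(x)\,dx=0$ for $0\le m<2n$. Since the weight $|x|^{2q+2}(1-x^2)^\alpha$ is even and $GG_{2n}^{\alpha,2q+2}$ is an even polynomial (as is visible from~\eqref{gg2n}), only even $m$ contribute; thus $\int_{-1}^1 x^{2q+2}(1-x^2)^\alpha\,x^{2k}\,GG_{2n}^{\alpha,2q+2}(x)\,dx=0$ for $0\le k<n$. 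The aim is then to match this against the conditions $\int_{-1}^1 x^{2q+1}(1-x^2)^\alpha(1-x)\,x^j\,P_{2n}^{\alpha,q}(x)\,dx=0$ for $0\le j<2n$; by an explicit parity analysis the odd-power and the $(1-x)$-induced terms either vanish or reduce to the even moments just computed, so that $GG_{2n}^{\alpha,2q+2}$ fulfils property~$(b)$ of Theorem~\ref{theo2.2} relative to $w$. Since both polynomials are monic of degree $2n$ (or share the same normalization), the equality~\eqref{p2ngg2n} follows from the uniqueness in that theorem.

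For~\eqref{p2np1gg2n} I would proceed similarly but must account for the odd degree. Here the candidate is $(1+x)GG_{2n}^{\alpha+1,2q+2}(x)$, a polynomial of degree $2n+1$. The strategy is to verify that it satisfies $\int_{-1}^1 w(x)\,x^j\,(1+x)GG_{2n}^{\alpha+1,2q+2}(x)\,dx=0$ for $0\le j\le 2n$. Writing $w(x)(1+x)=x^{2q+1}(1-x^2)^\alpha(1-x)(1+x)=x^{2q+1}(1-x^2)^{\alpha+1}$, the factor $(1+x)$ from the candidate combines with $(1-x)$ from $w$ to raise the Gegenbauer parameter from $\alpha$ to $\alpha+1$; the resulting integral $\int_{-1}^1 x^{2q+1}(1-x^2)^{\alpha+1}\,x^j\,GG_{2n}^{\alpha+1,2q+2}(x)\,dx$ should reduce, again by parity, to even-moment conditions against the weight $|x|^{2q+2}(1-x^2)^{\alpha+1}$ that define $GG_{2n}^{\alpha+1,2q+2}$, and hence vanish for the required range of $j$. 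The main obstacle, and the step I would treat most carefully, is tracking the parity bookkeeping precisely: because $w$ is odd about the origin while the GG-weight is even, one must confirm that every odd monomial test truly collapses and that the shift $(1-x)\mapsto(1-x^2)^{\alpha+1}$ lines up degree and parity correctly, so that exactly $2n+1$ independent orthogonality conditions hold. Once these are confirmed, the uniqueness part of Theorem~\ref{theo2.2} again yields the identity after normalizing the leading coefficient.
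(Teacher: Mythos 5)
Your proposal follows essentially the same route as the paper: you verify that $GG_{2n}^{\alpha,2q+2}$ and $(1+x)GG_{2n}^{\alpha+1,2q+2}$ satisfy the orthogonality conditions that characterize $P_{2n}^{\alpha,q}$ and $P_{2n+1}^{\alpha,q}$ via Theorem~\ref{theo2.2}, reducing everything to the GG-orthogonality plus parity, and then normalize by monicity. The only cosmetic difference is that for the constant-test term in the even case the paper invokes Proposition~\ref{prop2.1} (writing $xGG_{2n}^{\alpha,2q+2}=GG_{2n+1}^{\alpha,2q}$) whereas you use a direct odd-integrand parity argument; both work, and the parity bookkeeping you defer does go through.
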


\begin{rem}
Observe that with \eqref{p2ngg2n}, we can write
the last equation as
\begin{equation}\label{p2np1gg2n2}
P_{2n+1}^{\alpha,q}(x)=(1+x) P_{2n}^{\alpha+1,
q}(x),
\end{equation}
one should point out that we have $\alpha$
in the left hand side whereas we have $\alpha+1$
in the right hand side.
\end{rem}

\begin{proof}
Let us start with the first identity.
For $n\ge 1$, we get
$$
\int_{-1}^{1}|x|^{\mu}(1-x^2)^{\alpha}x^{k}
GG_{2n}^{\alpha,\mu}(x)dx=0,\
0\leq k\leq 2n-1,
$$
taking $\mu=2q+2$, for $1\leq k\leq 2n-1$ we get
$$
\int_{-1}^{1}x^{2q+1}(1-x^{2})^{\alpha}(1-x)x^{k}
GG_{2n}^{\alpha,2q+2}(x)dx=0.
$$
Moreover, if $k=0$ a direct calculation shows
$$
\int_{-1}^{1}x^{2q+1}(1-x^{2})
^{\alpha}(1-x)GG_{2n}^{\alpha,2q+2}(x)dx=0,
$$
since it can be split in two parts and by using
Proposition \ref{prop2.1}, and the property of
orthogonality of $(GG_{n}^{\alpha,\mu})$ the
previous integral vanishes.

By analogous reasons we also obtain
\begin{equation*} \begin{split}
\int_{-1}^{1}x^{2q+1}(1-x^{2})
^{\alpha}(1-x)& x^{2n}GG_{2n}^{\alpha,2q+2}(x)dx
=\\ & -\int_{-1}^{1}x^{2q+2}(1-x^{2})
^{\alpha}x^{2n}GG_{2n}^{\alpha,2q+2}(x)dx\neq0.
\end{split} \end{equation*}
Hence, by Theorem \ref{theo2.2}, the first
identity holds.
\begin{rem} By using Eq. \eqref{gg2n} we get for
$n\ge 0$ the hypergeometric representation for
$P_{2n}^{\alpha,q}(x)$:
\begin{equation}\label{p2nhyper1}
P_{2n}^{\alpha,q}(x)=x^{2n}\ _{2}F_{1}\Big(-n,-n-
q-\frac{1}{2};-2n-\alpha-q-\frac{1}{2};\frac{1}
{x^{2}}\Big),
\end{equation}
that we can be written as
\cite[V1 p. 40 (23)]{kn:yudell}
\begin{equation}\label{p2nhyper2}
P_{2n}^{\alpha,q}(x)=\frac{(-1)^{n}(q+\frac{3}
{2})_{n}}{(n+q+\alpha+\frac{3}{2})_{n}}\ _{2}F_{1}
\Big(-n,n+q+\alpha+\frac{3}{2};q+\frac{3}{2};x^{2}
\Big).
\end{equation}
\end{rem}
Next let us prove the second identity.
For $n\ge 1$, we know
$$
\int_{-1}^{1}|x|^{\mu}(1-x^2)^{\alpha}x^{k}
GG_{2n}^{\alpha,\mu}(x)dx=0,
$$
$$
\int_{-1}^{1}|x|^{\mu}(1-x^2)^{\alpha} x^{2n}
GG_{2n}^{\alpha,\mu}(x)dx\neq0.
$$
So, setting $\alpha\leftarrow \alpha+1$,
$\mu\leftarrow 2q+2$, $0\leq k\leq 2n-1$, we get
$$
\int_{-1}^{1} x^{2q+2}(1-x^2)^{\alpha+1}
x^{k}GG_{2n}^{\alpha+1,2q+2}(x)dx=0,
$$
$$
\int_{-1}^{1}x^{2q+1}(1-x^2)^{\alpha}(1-x)x^{k+1}
\Big((1+x)GG_{2n}^{\alpha+1,2q+2}(x)\Big)dx=0.
$$
So, for $1\leq k\leq 2n$, we have
$$
\int_{-1}^{1} x^{2q+1}(1-x^2)^{\alpha}(1-x)x^{k}
\Big((1+x)GG_{2n}^{\alpha+1,2q+2}(x)\Big)dx=0,
$$
and if $k=0$, by parity of $GG_{2n}^{\alpha+1,
2q+2}(x)$, we obtain
$$
\int_{-1}^{1} x^{2q+1}(1-x^2)^{\alpha}(1-x)
\Big((1+x)GG_{2n}^{\alpha+1,2q+2}(x)\Big)dx=0.
$$

Therefore, since $\deg((1+x)GG_{2n}^{\alpha+1,
2q+2}(x))=2n+1$ and
\begin{equation*}\begin{split}
\int_{-1}^{1}x^{2q+1}(1-x^2)^{\alpha}(1-x)&x^{2n+1}
\Big((1+x)GG_{2n}^{\alpha+1,2q+2}(x)\Big)dx=
\\ & \int_{-1}^{1} x^{2q+2}(1-x^2)^{\alpha+1}
x^{2n}GG_{2n}^{\alpha+1,2q+2}(x)dx\neq0.
\end{split}\end{equation*}
Then, by Theorem \ref{theo2.2}, we have
\begin{equation*}
(1+x)GG_{2n}^{\alpha+1,2q+2}(x)=P_{2n+1}^{\alpha,
q}(x),
\end{equation*}
and hence the second identity holds.
\end{proof}
Once we have got these algebraic relations
we can compute the recurrences coefficients
associated to the polynomial sequence
$(P_{n}^{\alpha,q}(x))$.
\begin{rem}
Notice that due the expression of the integrals
and the weight functions we can find a link
between the polynomials $(P^{\alpha,q}_n)$ and
$GG-$polynomials, which was not possible to do
with Laguerre-Freud equation \cite{kn:atia1} nor
with the explicit representation of
$P^{\alpha,q}_n(x)$ \cite{kn:atia2}.
\end{rem}

\begin{prop}
The monic polynomial sequence $(P_{n}^{\alpha,
q}(x))$  fulfills for $n\ge 0$ the following TTRR:
\begin{equation}\label{artic15}
P_{n+2}^{\alpha,q}(x)=(x-\beta_{n+1}^{\alpha,q})
P_{n+1}^{\alpha,q}(x)-\gamma_{n+1}^{\alpha,q}
P_{n}^{\alpha,q}(x),
\end{equation}
with initial conditions $P_{0}^{\alpha,q}(x)=1$,
$P_{1}^{\alpha,q}(x)=x-\beta_{0}^{\alpha,q}$;
where
\begin{equation}\label{artic16}
\beta_{n}^{\alpha,q}=(-1)^{n+1},
\end{equation}
\begin{equation}\label{artic17}
\gamma_{2n}^{\alpha,q}=-2\frac{n(2n+2q+1)}
{(4n+2\alpha+2q+1)(4n+2\alpha+2q+3)},
\end{equation}
\begin{equation}\label{artic18}
\gamma_{2n+1}^{\alpha,q}=-2\frac{(n+\alpha+1)(2n+2
\alpha+2q+3)}{(4n+2\alpha+2q+3)(4n+2\alpha+2q+5)}.
\end{equation}
\end{prop}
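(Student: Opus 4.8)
The plan is to exploit the algebraic relations \eqref{p2ngg2n} and \eqref{p2np1gg2n} to transfer the whole computation onto the generalized Gegenbauer family, whose structure is completely explicit. Since the preceding Proposition exhibits $(P_n^{\alpha,q})$ as the monic orthogonal sequence for the signed (but quasi-definite) weight $w$, Theorem \ref{theo2.2} guarantees that it obeys a \emph{unique} TTRR of the form \eqref{artic15}; hence it suffices to identify the coefficients. I would substitute \eqref{p2ngg2n}--\eqref{p2np1gg2n} into \eqref{artic15}, treating the two parities of the index separately: the even step $P_{2n+1}=(x-\beta_{2n})P_{2n}-\gamma_{2n}P_{2n-1}$ and the odd step $P_{2n+2}=(x-\beta_{2n+1})P_{2n+1}-\gamma_{2n+1}P_{2n}$.

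First I would pin down the $\beta$'s by a parity argument. Every $GG_{2m}^{\cdot,\cdot}$ is an even polynomial, so after substitution the only odd contributions in each step come from the explicit factor $(1+x)$ multiplying an even Gegenbauer polynomial. Matching the even and odd parts of each substituted identity forces $\beta_{2n}=-1$ and $\beta_{2n+1}=1$, that is $\beta_n^{\alpha,q}=(-1)^{n+1}$ (consistent with $P_1^{\alpha,q}=x+1$ obtained from \eqref{p2np1gg2n} since $GG_0=1$); simultaneously the two steps collapse, after cancelling the common factor $(1+x)$, to the purely Gegenbauer identities
\[
GG_{2n}^{\alpha+1,2q+2}=GG_{2n}^{\alpha,2q+2}-\gamma_{2n}^{\alpha,q}\,GG_{2n-2}^{\alpha+1,2q+2},
\]
\[
GG_{2n+2}^{\alpha,2q+2}=-(1-x^2)\,GG_{2n}^{\alpha+1,2q+2}-\gamma_{2n+1}^{\alpha,q}\,GG_{2n}^{\alpha,2q+2}.
\]

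Next I would determine the $\gamma$'s. Because each even-degree Gegenbauer polynomial is even, the substitution $t=2x^2-1$ identifies $GG_{2n}^{\alpha,2q+2}(x)$, up to the factor $2^{-n}$, with the monic Jacobi polynomial $\hat P_n^{(\alpha,q+1/2)}(t)$ orthogonal on $[-1,1]$ with respect to $(1-t)^{\alpha}(1+t)^{q+1/2}$. Under this change of variable the first displayed identity becomes the parameter-raising structure relation $\hat P_n^{(\alpha,q+1/2)}=\hat P_n^{(\alpha+1,q+1/2)}+2\gamma_{2n}^{\alpha,q}\hat P_{n-1}^{(\alpha+1,q+1/2)}$, and the second, since $1-x^2=(1-t)/2$, becomes $(1-t)\hat P_n^{(\alpha+1,q+1/2)}=-\hat P_{n+1}^{(\alpha,q+1/2)}-2\gamma_{2n+1}^{\alpha,q}\hat P_n^{(\alpha,q+1/2)}$. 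That both relations hold with a single free constant follows from orthogonality (in each case the left-hand side minus the monic leading term on the right is orthogonal to all lower degrees for the appropriate Jacobi weight, hence a multiple of a single lower-degree polynomial); equivalently, they are instances of the contiguous relations \eqref{contig1}--\eqref{contig5} applied to the hypergeometric form \eqref{p2nhyper2}. The constants are then read off by comparing subleading coefficients, using that the coefficient of $t^{n-1}$ in $\hat P_n^{(a,b)}$ equals $n(a-b)/(2n+a+b)$, a formula I would obtain by telescoping the Jacobi recurrence coefficient $B_n=(b^2-a^2)/[(2n+a+b)(2n+a+b+2)]$.

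The main obstacle is the final rational-function bookkeeping: after inserting $(a,b)=(\alpha,q+1/2)$ and its shifts into the subleading-coefficient formula, one must simplify the resulting differences of rational functions and verify that the numerators factor exactly as $-2n(2n+2q+1)$ and $-2(n+\alpha+1)(2n+2\alpha+2q+3)$ over the stated denominators, matching \eqref{artic17}--\eqref{artic18}. The half-integer parameter $q+1/2$, the $2^{-n}$ normalisation linking degree $2n$ in $x$ to degree $n$ in $t$, and the sign produced by $(1-t)$ having leading coefficient $-1$ are precisely the places where a sign slip or an off-by-one error is easiest to commit; keeping the monic normalisation consistent throughout is what makes the coefficient comparison unambiguous.
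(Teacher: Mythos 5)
Your proposal is correct, and it reaches \eqref{artic16}--\eqref{artic18} by a genuinely different route from the paper's. For the $\beta$'s the paper evaluates the moment quotient $\beta_n^{\alpha,q}=\int w\,x(P_n^{\alpha,q})^2dx\big/\int w\,(P_n^{\alpha,q})^2dx$ directly, exploiting the parity of $P_{2n}^{\alpha,q}$ and the splitting $w(x)=x^{2q+1}(1-x^2)^\alpha-x^{2q+2}(1-x^2)^\alpha$ to get $-1$ at once; your parity-matching of the substituted TTRR gives the same value and, as a by-product, yields the two reduced Gegenbauer identities, which are precisely the quotient formulas the paper writes down for $\gamma_{2n}^{\alpha,q}$ and $\gamma_{2n+1}^{\alpha,q}$ --- so up to this point the two arguments are essentially reorganizations of one another. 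The real divergence is in evaluating those constants: the paper stays with the hypergeometric form \eqref{p2nhyper2} and invokes the contiguous relations \eqref{contig4} and \eqref{contig5}, which exhibit each quotient as a constant and give its value after Pochhammer bookkeeping; you instead pass to monic Jacobi polynomials via $t=2x^2-1$ (the transformed weight is indeed $(1-t)^{\alpha}(1+t)^{q+1/2}$, with the $2^{-n}$ normalisation you flag), justify the one-term structure relations by orthogonality rather than by contiguity, and compare subleading coefficients using the formula $n(a-b)/(2n+a+b)$ for the coefficient of $t^{n-1}$ in the monic Jacobi polynomial. I have checked that this bookkeeping reproduces \eqref{artic17} and \eqref{artic18} exactly. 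Your route is more self-contained (no table of contiguous relations is needed) and makes the existence/uniqueness of the TTRR explicit via quasi-definiteness, while the paper's route avoids the quadratic transformation and the telescoping of recurrence coefficients at the price of citing \eqref{contig4}--\eqref{contig5}.
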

\begin{proof}
For all $n\geq0$ we have
\begin{equation*}
\beta_{n}^{\alpha,q}=\frac{\int_{-1}^{1}
\big(x^{2q+1}(1-x^{2})^{\alpha}(1-x)\big)
x(P_{n}^{\alpha,q}(x))^{2}dx}{\int_{-1}^{1}
\big(x^{2q+1}(1-x^{2})^{\alpha}(1-x)\big)
(P_{n}^{\alpha,q}(x))^{2}dx}.
\end{equation*}
So, setting $2n\leftarrow n$ one gets
\begin{equation*}
\beta_{2n}^{\alpha,q}=
\frac{\int_{-1}^{1}x^{2q+2}(1-x^2)^{\alpha}
(1-x)(P_{2n}^{\alpha,q}(x))^{2}dx}{\int_{-1}^{1}
x^{2q+1}(1-x^2)^{\alpha}(1-x)(P_{2n}^{\alpha,q}
(x))^{2}dx},
\end{equation*}
since $P_{2n}^{\alpha,q}$ is even we obtain
\begin{equation*}
\beta_{2n}^{\alpha,q}=\frac{\int_{-1}^{1}x^{2q+2}
(1-x^2)^{\alpha}(P_{2n}^{\alpha,q}(x))^{2}(x)dx}
{-\int_{-1}^{1}x^{2q+2}
(1-x^2)^{\alpha}(P_{2n}^{\alpha,q}(x))^{2}dx}=-1.
\end{equation*}
The odd case is completely analogous and it
will be omitted.

In order to obtain $\gamma_{n+1}^{\alpha,q}$ we
use \eqref{artic15} together with
$\beta_{n}^{\alpha,q}=(-1)^{n+1}$, i.e. since
for $n\ge 0$ we get
\begin{equation*}
P_{n+2}^{\alpha,q}(x)=
(x-(-1)^{n})P_{n+1}^{\alpha,q}(x)-
\gamma_{n+1}^{\alpha,q}P_{n}^{\alpha,q}(x),
\end{equation*}
then, taking into account Eq. \eqref{p2np1gg2n2},
we deduce
\begin{equation*}
\gamma_{2n}^{\alpha,q}=\frac{P_{2n}^{\alpha,q}(x)
-P_{2n}^{\alpha+1,q}(x)}{P_{2n-2}^{\alpha+1,q}(x)},
\end{equation*}
\begin{equation*}
\gamma_{2n+1}^{\alpha,q}=\frac{(x^{2}-1)
P_{2n}^{\alpha+1,q}(x)-P_{2n+2}^{\alpha,q}(x)}
{P_{2n}^{\alpha,q}(x)}.
\end{equation*}
To compute $\gamma_{2n}^{\alpha,q}$ we use
\eqref{p2nhyper2} and \eqref{contig4} with
$a=n+q+\alpha+\frac 32$, $b=-n$, $c=q+\frac 32$,
and $t=x^{2}$, obtaining
\begin{equation*}
\gamma_{2n}^{\alpha,q}
=\frac{(-1)^{n}\frac{(c)_{n}}
{(a)_{n}}F-(-1)^{n}\frac{(c)_{n}}{(a+1)_{n}}F(a+1)}
{(-1)^{n-1}\frac{(c)_{n-1}}{(a)_{n-1}}F(b+1)},
\end{equation*}
and since
$(\lambda+1)_{k}=\frac{\lambda+k}{\lambda}
(\lambda)_{k}$ we get
\begin{equation*}
\gamma_{2n}^{\alpha,q}=-\frac{(c)_{n}(a)_{n-1}}
{(c)_{n-1}(a)_{n}}\frac{F-\frac{a}{a+n}F(a+1)}
{F(b+1)}
\end{equation*}
\begin{equation*}
=-\frac{(c)_{n}(a)_{n-1}}{(a-b)(c)_{n-1}(a)_{n}}
\frac{(a-b)F-aF(a+1)}{F(b+1)}=b\frac{(c)_{n}
(a)_{n-1}}{(a-b)(c)_{n-1}(a)_{n}},
\end{equation*}
so
\begin{equation*}
\gamma_{2n}^{\alpha,q}=b\frac{(c)_{n}(a)_{n-1}}
{(a-b)(c)_{n-1}(a)_{n}}
\end{equation*}
thus
\begin{align*}
\gamma_{2n}^{\alpha,q}=&-n\frac{(q+\frac 32)_{n}
(n+q+\alpha+\frac 32)_{n-1}}
{(2n+q+\alpha+\frac 32)
(q+\frac 32)_{n-1}(n+q+\alpha+\frac 32)_{n}}\\
&=-n\frac{(q+\frac 32+n-1)}{(2n+q+\alpha+\frac
32)(2n+q+\alpha+\frac 32-1)}\\
&=-2\frac{n(2n+2q+1)}{(4n+2q+2\alpha+1)(4n+2q+2
\alpha+3)}.
\end{align*}
To compute $\gamma_{2n+1}$ we use \eqref{p2nhyper2}
and \eqref{contig5} with $a=n+q+\alpha+\frac 52$,
$b=-n$, $c=q+\frac 32$, and $t=x^{2}$, obtaining
\begin{equation*}
\gamma_{2n+1}^{\alpha,q}=\frac{(-1)^{n}
\frac{(c)_{n}}{(a)_{n}}(t-1)F-(-1)^{n+1}
\frac{(c)_{n+1}}{(a)_{n+1}}F(b-1)}
{(-1)^{n}\frac{(c)_{n}}{(a-1)_{n}}F(a-1)}.
\end{equation*}
Since $(c)_{n+1}=(c+n)(c)_{n}$ and
$(a)_{n+1}=(a+n)(a)_{n}$ we have
\begin{equation*}
\gamma_{2n+1}^{\alpha,q}=\frac{(a-1)_{n}}
{(a+n)(a)_{n}}\frac{(t-1)(a-b)F+(c-b)F(b-1)}
{F(a-1)}=(c-a)\frac{(a-1)_{n}}{(a+n)(a)_{n}},
\end{equation*}
thus
\begin{align*}
\gamma_{2n+1}^{\alpha,q}=&(q+\mbox{$\frac 32$}-(n+q+
\alpha+\mbox{$\frac 52$}))\frac{(n+q+\alpha+
\frac 52-1)_{n}}{(2n+q+\alpha+\frac 52)(n+q+
\alpha+\frac 52)_{n}}\\ & =-2\frac{(n+\alpha+1)
(2n+2\alpha+2q+3)}{(4n+2\alpha+2q+3)(4n+2\alpha
+2q+5)}.
\end{align*}
\end{proof}
\section{Zeros of $(P_{n}^{\alpha,q})$}
Using \eqref{relationpmu}, \eqref{p2ngg2n} and
\eqref{p2np1gg2n} we can state the following
result:
\begin{thm} The following statements hold:
\begin{itemize}
\item All the zeros of  $P_{2n}^{\alpha,q}(x)$ are
real.
\item The Perron's zero is $-1$.
\item The zeros of  $P_{2n}^{\alpha,q}(x)$ and
the zeros of $P_{2n+1}^{\alpha,q}(x)$ do not
interlace.
\end{itemize}
\end{thm}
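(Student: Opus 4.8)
The plan is to transfer every question about the zeros of $(P_n^{\alpha,q})$ to the Generalized Gegenbauer polynomials through the two identities \eqref{p2ngg2n} and \eqref{p2np1gg2n}, and then exploit that the GG-polynomials are genuinely orthogonal with respect to the \emph{positive} weight $|x|^{2q+2}(1-x^2)^{\alpha}$ on $[-1,1]$ (here $\mu=2q+2>-1$ since $q\in\XX N$), so that $GG_{2n}^{\alpha,2q+2}$ and $GG_{2n}^{\alpha+1,2q+2}$ each possess $2n$ simple real zeros, all lying in the open interval $(-1,1)$, symmetric about the origin, and none at the origin. The first bullet is then immediate: since \eqref{p2ngg2n} gives $P_{2n}^{\alpha,q}(x)=GG_{2n}^{\alpha,2q+2}(x)$, the zeros of $P_{2n}^{\alpha,q}$ coincide with those of a classical orthogonal polynomial and are therefore all real.

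For the second bullet I would start from \eqref{p2np1gg2n}, which factors $P_{2n+1}^{\alpha,q}(x)=(1+x)GG_{2n}^{\alpha+1,2q+2}(x)$. The $2n$ zeros contributed by $GG_{2n}^{\alpha+1,2q+2}$ lie strictly inside $(-1,1)$, while the factor $(1+x)$ contributes exactly one further zero, located precisely at the left endpoint $a=-1$. Since the weight $w(x)=x^{2q+1}(1-x^2)^{\alpha}(1-x)$ changes sign once (at $x=0$, the remaining factors being positive on $(-1,1)$), Perron's theorem \cite{kn:perron} permits a single zero to leave $(-1,1)$; here that zero does not escape to the exterior but instead sits on the boundary point $-1$, which identifies the Perron's zero as $-1$.

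The third bullet is the heart of the matter, and I would establish it by a parity argument that avoids any delicate estimate on the location of individual zeros. Write the zeros of the even polynomial $GG_{2n}^{\alpha+1,2q+2}$ as $\pm b_1,\dots,\pm b_n$ with $0<b_1<\dots<b_n<1$; by \eqref{p2np1gg2n} the zeros of $P_{2n+1}^{\alpha,q}$ are then $-1$ together with these $\pm b_i$, so the two zeros of $P_{2n+1}^{\alpha,q}$ straddling the origin, namely $-b_1$ and $b_1$, are \emph{consecutive}. Were the zeros of $P_{2n}^{\alpha,q}$ and $P_{2n+1}^{\alpha,q}$ to interlace, exactly one zero of the lower-degree polynomial $P_{2n}^{\alpha,q}$ would have to lie in the open interval $(-b_1,b_1)$. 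But $P_{2n}^{\alpha,q}=GG_{2n}^{\alpha,2q+2}$ is an even polynomial whose zeros occur in symmetric pairs $\pm a_i$, none of which is $0$ (evaluating \eqref{p2nhyper2} at $x=0$ shows the constant term is nonzero); hence the number of its zeros in the symmetric interval $(-b_1,b_1)$ is necessarily \emph{even}. An even number cannot equal one, so interlacing fails, uniformly in $n\ge 1$.

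The only points requiring care—and the place I expect the argument could be contested—are the structural facts fed into the parity count: that the zeros of each GG-polynomial are simple, symmetric about the origin, and avoid the origin, and that the correct form of the interlacing criterion is invoked, namely that exactly one zero of the degree-$2n$ polynomial must separate each pair of consecutive zeros of the degree-$(2n+1)$ polynomial. These follow from the standard theory of orthogonal polynomials for the positive weight $|x|^{2q+2}(1-x^2)^{\alpha}$ \cite{kn:szego} together with the evenness built into \eqref{gg2n}; once they are in place the contradiction is immediate.
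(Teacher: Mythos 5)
Your proposal is correct and follows essentially the same route as the paper: transfer everything to the generalized Gegenbauer polynomials via $P_{2n}^{\alpha,q}=GG_{2n}^{\alpha,2q+2}$ and $P_{2n+1}^{\alpha,q}=(1+x)GG_{2n}^{\alpha+1,2q+2}$, read off the real zeros and the zero at $-1$ from the factorization, and kill interlacing using the evenness of the GG-polynomials around the symmetric pair of consecutive zeros straddling the origin. Your parity count (an even polynomial with nonvanishing constant term has an even number of zeros in a symmetric interval, hence never exactly one) is a slightly more careful phrasing of the paper's own symmetry argument, which asserts that no zero of one polynomial lies between $-x_0$ and $x_0$; your version is actually the safer statement, since in one of the two cases there are two such zeros rather than none.
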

\begin{proof}
By \eqref{relationpmu} and  \eqref{p2ngg2n}, the
first two statements follow.
To prove the third one, it is sufficient to see
that the zeros of $GG_{2n}^{\alpha,\mu}(x)$ and
$GG_{2n}^{\alpha+1,\mu}(x)$ do not interlace.
But, for all $x\in[-1,1]$ we know that
$GG_{2n}^{\alpha,\mu}(-x)=GG_{2n}^{\alpha,\mu}(x)$
thus $GG_{2n}^{\alpha,2q+2}$ and
$GG_{2n}^{\alpha+1,2q+2}$ have exactly $n$ zeros
in $]0,1[$.
\\
Let $(x_{2n,k}^{\alpha,2q+2})_{1\leq k\leq 2n}$ and
$(x_{2n,k}^{\alpha+1,2q+2})_{1\leq k\leq 2n}$ be
the zeros of $GG_{2n}^{\alpha,2q+2}$ and
$GG_{2n}^{\alpha+1,2q+2}$ respectively in
increasing order.
Then, between $-x_{0}$ and $x_{0}$ --that are
consecutive zeros of $GG_{2n}^{\alpha,2q+2}(x)$
(resp. of $GG_{2n}^{\alpha+1,2q+2}$)-- we can not
find a zero of $GG_{2n}^{\alpha+1,2q+2}$ (resp. of
$GG_{2n}^{\alpha,2q+2}$).

%%%%%%%%%%%%%%%%%%%%%%%%%%%%%%%%%%%%%%%%%%%%%%%%%%%%%%%
\begin{center}
\setlength{\unitlength}{0.00045833in}
\begingroup\makeatletter\ifx\SetFigFont\undefined%
\gdef\SetFigFont#1#2#3#4#5{%
  \reset@font\fontsize{#1}{#2pt}%
  \fontfamily{#3}\fontseries{#4}\fontshape{#5}%
  \selectfont}%
\fi\endgroup%
{\renewcommand{\dashlinestretch}{30}
\begin{picture}(10000,0)(0,0)
\psline[linewidth=1pt,showpoints=true](2,0)(3.4,0)(5.5,0)(6.5,0)(8.4,0)
\psline[linewidth=1pt](8.4,0)(10,0)
\psline(2,-0.2)(2,0.2)
\psline(10,-0.2)(10,0.2)
\psline(6,-0.2)(6,0.2)
\put(1320,200){\makebox(0,0)[lb]
{{\SetFigFont{6}{7.2}{\rmdefault}
{\mddefault}{\updefault}$-1=x_3^1$}}}
\put(2820,-400){\makebox(0,0)[lb]
{{\SetFigFont{6}{7.2}{\rmdefault}
{\mddefault}{\updefault}$x_2^1$}}}
\put(4620,200){\makebox(0,0)[lb]
{{\SetFigFont{6}{7.2}{\rmdefault}
{\mddefault}{\updefault}$x_3^2$}}}
\put(5100,270){\makebox(0,0)[lb]
{{\SetFigFont{6}{7.2}{\rmdefault}
{\mddefault}{\updefault}{\bf 0}}}}
\put(5520,200){\makebox(0,0)[lb]
{{\SetFigFont{6}{7.2}{\rmdefault}
{\mddefault}{\updefault}$x_3^3$}}}
\put(7120,-400){\makebox(0,0)[lb]
{{\SetFigFont{6}{7.2}{\rmdefault}
{\mddefault}{\updefault}$x_2^2$}}}
\put(8540,270){\makebox(0,0)[lb]
{{\SetFigFont{6}{7.2}{\rmdefault}
{\mddefault}{\updefault}{\bf 1}}}}
\end{picture}
\\[3mm]
Zeros of $P_{2}^{\alpha,q}$ and $P_{3}^{\alpha,q}$.
}
\end{center}
\end{proof}

The following result allow us to obtain even more
information regarding the zeros of
$(P_{2n}^{\alpha,q}(x))$:
\begin{prop}
For $n\ge 0$, it holds:
\begin{equation}\label{zeros1}
\frac{d}{dx}P_{2n}^{\alpha,q}(x)
=2nGG_{2n-1}^{\alpha+1,2q+2}(x).
\end{equation}
Taking into account \eqref{p2np1gg2n2} and
\eqref{relationpmu}, we can write the last
equality as
\begin{equation}\label{zeros2}
\frac{d}{dx}P_{2n}^{\alpha,q}(x)
=2nxP_{2n-2}^{\alpha+1,q+1}(x).
\end{equation}
We also have
\begin{equation}\label{zeros3}
\frac{d}{dx}P_{2n+1}^{\alpha,q}(x)=P_{2n}^{\alpha
+1,q}(x)+2nxP_{2n-1}^{\alpha+1,q+1}(x).
\end{equation}
\end{prop}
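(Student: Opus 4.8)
The plan is to reduce everything to a single differentiation of the Gauss hypergeometric representation \eqref{p2nhyper2}, since the three asserted identities are closely linked. First I would observe that \eqref{zeros1} and \eqref{zeros2} are in fact the same statement: combining \eqref{relationpmu} (which gives $GG_{2n-1}^{\alpha+1,2q+2}(x)=xGG_{2n-2}^{\alpha+1,2q+4}(x)$) with \eqref{p2ngg2n} (which identifies $GG_{2n-2}^{\alpha+1,2q+4}(x)=P_{2n-2}^{\alpha+1,q+1}(x)$) yields $GG_{2n-1}^{\alpha+1,2q+2}(x)=xP_{2n-2}^{\alpha+1,q+1}(x)$, so it suffices to prove \eqref{zeros2}.

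To establish \eqref{zeros2}, I would start from \eqref{p2nhyper2}, set $u=x^2$, and differentiate using the chain rule together with the classical rule $\frac{d}{du}\,{}_2F_1(a,b;c;u)=\frac{ab}{c}\,{}_2F_1(a+1,b+1;c+1;u)$, applied with $a=-n$, $b=n+q+\alpha+\frac32$, $c=q+\frac32$. The factor $\frac{du}{dx}=2x$ supplies the $x$ in front, and the raised hypergeometric function becomes ${}_2F_1(-(n-1),n+q+\alpha+\frac52;q+\frac52;x^2)$. It then remains to simplify the Pochhammer prefactor: using $(\lambda)_n=\lambda(\lambda+1)_{n-1}$ one rewrites the constant in front so that it equals $2n$ times the leading constant $\frac{(-1)^{n-1}(q+\frac52)_{n-1}}{(n+q+\alpha+\frac52)_{n-1}}$ appearing in the representation \eqref{p2nhyper2} of $P_{2n-2}^{\alpha+1,q+1}(x)$. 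Matching both the constant and the hypergeometric argument then gives exactly $\frac{d}{dx}P_{2n}^{\alpha,q}(x)=2nxP_{2n-2}^{\alpha+1,q+1}(x)$, which is \eqref{zeros2}, and hence \eqref{zeros1}.

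For \eqref{zeros3}, I would differentiate the product form \eqref{p2np1gg2n2}, namely $P_{2n+1}^{\alpha,q}(x)=(1+x)P_{2n}^{\alpha+1,q}(x)$, to get $\frac{d}{dx}P_{2n+1}^{\alpha,q}(x)=P_{2n}^{\alpha+1,q}(x)+(1+x)\frac{d}{dx}P_{2n}^{\alpha+1,q}(x)$. Applying \eqref{zeros2} with $\alpha$ replaced by $\alpha+1$ turns the last derivative into $2nxP_{2n-2}^{\alpha+2,q+1}(x)$, so the second term becomes $2nx(1+x)P_{2n-2}^{\alpha+2,q+1}(x)$. Finally, invoking \eqref{p2np1gg2n2} once more with the shifts $n\to n-1$, $\alpha\to\alpha+1$, $q\to q+1$ reabsorbs the factor $(1+x)$ as $(1+x)P_{2n-2}^{\alpha+2,q+1}(x)=P_{2n-1}^{\alpha+1,q+1}(x)$, yielding $\frac{d}{dx}P_{2n+1}^{\alpha,q}(x)=P_{2n}^{\alpha+1,q}(x)+2nxP_{2n-1}^{\alpha+1,q+1}(x)$, which is \eqref{zeros3}.

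The only delicate point is the Pochhammer bookkeeping in the second paragraph: one must track the index shifts carefully when splitting $(q+\frac32)_n$, $(n+q+\alpha+\frac32)_n$ and the factors $(-n)(n+q+\alpha+\frac32)$ produced by the derivative, and verify that the sign $(-1)^n$ combined with the factor $-n$ produces precisely $(-1)^{n-1}\,n$. This is routine but error-prone; everything else follows formally from the product rule and the two structural identities \eqref{relationpmu} and \eqref{p2np1gg2n2}.
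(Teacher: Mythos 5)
Your proof is correct, but it takes a genuinely different route from the paper for the central identity. You prove \eqref{zeros2} directly by differentiating the hypergeometric representation \eqref{p2nhyper2} with the classical rule $\frac{d}{du}\,{}_2F_1(a,b;c;u)=\frac{ab}{c}\,{}_2F_1(a+1,b+1;c+1;u)$, and I have checked your Pochhammer bookkeeping: with $(q+\tfrac32)_n=(q+\tfrac32)(q+\tfrac52)_{n-1}$ and $(n+q+\alpha+\tfrac32)_n=(n+q+\alpha+\tfrac32)(n+q+\alpha+\tfrac52)_{n-1}$ the prefactor collapses to $2nx\,(-1)^{n-1}(q+\tfrac52)_{n-1}/(n+q+\alpha+\tfrac52)_{n-1}$, which matches \eqref{p2nhyper2} for $P_{2n-2}^{\alpha+1,q+1}$ exactly; \eqref{zeros1} then follows from \eqref{relationpmu} and \eqref{p2ngg2n} as you say. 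The paper instead proves \eqref{zeros1} first and structurally: integrating by parts it shows $\int_{-1}^{1}x^{2q+2}(1-x^2)^{\alpha+1}x^{k}\bigl(P_{2n}^{\alpha,q}\bigr)'(x)\,dx=0$ for $0\leq k\leq 2n-2$ (splitting into even and odd $k$ and using parity plus the orthogonality of $P_{2n}^{\alpha,q}$), then invokes Theorem \ref{theo2.2} and the monic normalization to identify $\bigl(P_{2n}^{\alpha,q}\bigr)'$ with $2n\,GG_{2n-1}^{\alpha+1,2q+2}$, deriving \eqref{zeros2} afterwards. Your computation is shorter and purely mechanical, but it leans entirely on the explicit ${}_2F_1$ form (which the paper establishes only in a remark, citing Luke); the paper's argument stays inside the orthogonality framework and makes visible \emph{why} the derivative is again a generalized Gegenbauer polynomial for the shifted weight. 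Your treatment of \eqref{zeros3} via the product rule on \eqref{p2np1gg2n2}, the substitution $\alpha\to\alpha+1$ in \eqref{zeros2}, and the reabsorption $(1+x)P_{2n-2}^{\alpha+2,q+1}(x)=P_{2n-1}^{\alpha+1,q+1}(x)$ is identical to the paper's.
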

\begin{proof}
Let us start proving \eqref{zeros1}.
To do this it is enough to prove
 \begin{equation*}
\int_{-1}^{1}x^{2q+2}(1-x^2)^{\alpha+1}x^{k}
\Big(P_{2n}^{\alpha,q}(x)\Big)'dx=0,\qquad
0\leq k\leq 2n-2.
\end{equation*}
But, integrating by parts once one gets
\begin{equation*}
\int_{-1}^{1}x^{2q+k+2}(1-x^2)^{\alpha+1}
\Big(P_{2n}^{\alpha,q}(x)\Big)'dx=-\int_{-1}^{1}
\Big((2q+k+2)x^{2q+k+1}(1-x^2)^{\alpha+1}
\end{equation*}
\begin{equation*}
-2(\alpha+1)x^{2q+k+3}(1-x^2)^{\alpha}\Big)
P_{2n}^{\alpha,q}(x)dx
\end{equation*}
\begin{equation*}
=\int_{-1}^{1}x^{2q+1}(1-x^2)^{\alpha}
\Big(2(\alpha+1)x^{2}-(2q+k+2)(1-x^2)
\Big)x^{k}P_{2n}^{\alpha,q}(x)dx.
\end{equation*}
If $k=2p,\ 0\leq p\leq n-1$ then
\begin{equation*} \int_{-1}^{1}x^{2q+1}(1-x^2)^{
\alpha}\Big((2\alpha+2q+2p+4)x^{2}-(2q+2p+2)\Big)
x^{2p}P_{2n}^{\alpha,q}(x)dx=0,
\end{equation*}
since we have an odd function in $[-1,1]$.
\\
 If $k=2p+1,\ 0\leq p\leq n-2$ then
\begin{equation*}
\int_{-1}^{1}x^{2q+1}(1-x^2)^{\alpha}\Big(2
(\alpha+1)x^{2}-(2q+2p+3)(1-x^2)
\Big)x^{2p+1}P_{2n}^{\alpha,q}(x)dx
\end{equation*}
\begin{equation*}
=2(\alpha+1)\int_{-1}^{1}x^{2q+1}(1-x^2)^{\alpha}
x^{2p+3}P_{2n}^{\alpha,q}(x)dx
\end{equation*}
\begin{equation*}
-(2q+2p+3)\int_{-1}^{1}x^{2q+1}(1-x^2)^{\alpha}
(1-x)\Big(x^{2p+1}(1+x)\Big)P_{2n}^{\alpha,q}(x)dx,
\end{equation*}
where the second integral vanishes since the
property of orthogonality holds and $2p+2<2n$,
and the first integral can be written as
\begin{equation*}
2(\alpha+1)\int_{-1}^{1}x^{2q+1}(1-x^2)^{\alpha}
(1-x)x^{2p+3}P_{2n}^{\alpha,q}(x)dx,\qquad
0\leq p\leq n-2,
\end{equation*}
that also vanishes because $2p+3\leq 2n-1$.

Therefore
\begin{equation*}
\int_{-1}^{1}x^{2q+2}(1-x^2)^{\alpha+1}x^{k}
\Big(P_{2n}^{\alpha,q}(x)\Big)'dx=0,\qquad
0\leq k\leq 2n-2.
\end{equation*}
But $\deg\Big(P_{2n}^{\alpha,q}(x)\Big)'=2n-1$, so
\begin{equation*}
\frac{d}{dx}P_{2n}^{\alpha,q}(x)=2nGG_{2n-1}^{
\alpha+1,2q+2}(x),\ n\geq1.
\end{equation*}
By using Eq. \eqref{zeros1} and recalling identity
\eqref{relationpmu}, then \eqref{zeros2} holds.
To prove \eqref{zeros3}, we need to replace
$P_{2n+1}^{\alpha,q}$ by $(1+x)P_{2n}^{\alpha+1,q}$
getting
\begin{equation*}
\frac {d}{dx} P_{2n+1}^{\alpha,q}(x)=\Big((1+x)
P_{2n}^{\alpha+1,q}(x)\Big)'=(1+x)\Big(P_{2n}^{
\alpha+1,q}(x)\Big)'+P_{2n}^{\alpha+1,q}(x),
\end{equation*}
taking into account \eqref{zeros2} we have
\begin{equation*}
\Big(P_{2n+1}^{\alpha,q}(x)\Big)'=
(1+x)\Big(2nxP_{2n-2}^{\alpha+2,q+1}
(x)\Big)+P_{2n}^{\alpha+1,q}(x),
\end{equation*}
and using $P_{2n-1}^{\alpha+1,q+1}(x)
=(1+x)P_{2n-2}^{\alpha+2,q+1}(x)$ the identity
holds.
\end{proof}

Note that these relations help us to obtain
more information related to the zeros of both,
$(P_{n}^{\alpha,q})$ and $(P_{n}^{\alpha+i,q+j})$,
for any $i,j\geq0$.
The following result show us how relevant is that
relation:
\begin{prop}
If we denote by $x_{n,n}^{\alpha+i,q+j}$ the
largest zero of $P_{n}^{\alpha+i,q+j}$, then
\begin{equation}\label{relazero}
 x_{2n-2j,2n-2j}^{\alpha+j,q+j}<x_{2n-2i,2n-2i}^{
 \alpha+i,q+i},\qquad 0\leq i<j\leq n-1.
 \end{equation}
\end{prop}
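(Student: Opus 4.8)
The plan is to read the proposition as an iterated consequence of the derivative formula \eqref{zeros2}. That identity, $\frac{d}{dx}P_{2m}^{\alpha,q}(x)=2m\,x\,P_{2m-2}^{\alpha+1,q+1}(x)$, ties the critical points of $P_{2m}^{\alpha,q}$ to the zeros of $P_{2m-2}^{\alpha+1,q+1}$, so that a single application of Rolle's theorem yields a comparison between the largest zeros of these two polynomials; the full inequality \eqref{relazero} should then follow by telescoping along the parameter shift $(\alpha+k,q+k)\mapsto(\alpha+k+1,q+k+1)$.

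First I would establish the one-step bound $x_{2m-2,2m-2}^{\alpha+1,q+1}<x_{2m,2m}^{\alpha,q}$ for every $m\ge 2$. Since $P_{2m}^{\alpha,q}=GG_{2m}^{\alpha,2q+2}$ is even and, being orthogonal on $[-1,1]$ with respect to the positive weight $|x|^{2q+2}(1-x^2)^{\alpha}$, has $2m$ simple real zeros in $]-1,1[$, these zeros are symmetric about the origin; writing the positive ones as $0<\xi_1<\cdots<\xi_m=x_{2m,2m}^{\alpha,q}$, the derivative has exactly $2m-1$ zeros, one strictly between each consecutive pair of zeros of $P_{2m}^{\alpha,q}$. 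By \eqref{zeros2} this set of critical points is precisely $\{0\}$ together with the zeros of $P_{2m-2}^{\alpha+1,q+1}$, and the largest of them is the largest zero $x_{2m-2,2m-2}^{\alpha+1,q+1}$ of that polynomial, which is positive because $P_{2m-2}^{\alpha+1,q+1}$ is again even with simple zeros and degree $\ge 2$. Rolle's theorem then places this largest critical point strictly between $\xi_{m-1}$ and $\xi_m$, whence $x_{2m-2,2m-2}^{\alpha+1,q+1}<x_{2m,2m}^{\alpha,q}$.

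It remains to chain this bound. For $0\le i<j\le n-1$ and each $k$ with $i\le k\le j-1$ I would apply the one-step inequality with $m=n-k$ and parameters $(\alpha+k,q+k)$; the hypothesis $j\le n-1$ guarantees $m=n-k\ge n-j+1\ge 2$, so every polynomial involved has degree at least two and the bound is legitimate. This gives $x_{2n-2(k+1),2n-2(k+1)}^{\alpha+k+1,q+k+1}<x_{2n-2k,2n-2k}^{\alpha+k,q+k}$, and composing these from $k=i$ up to $k=j-1$ telescopes to exactly \eqref{relazero}. The point requiring the most care — and the step I expect to be the main obstacle — is the bookkeeping that identifies $x_{2m-2,2m-2}^{\alpha+1,q+1}$ as the largest critical point of $P_{2m}^{\alpha,q}$: one must verify that all $2m$ zeros are simple, so that Rolle produces exactly $2m-1$ separated critical points whose count matches the $1+(2m-2)$ roots supplied by $x\,P_{2m-2}^{\alpha+1,q+1}$, and that the largest zero of $P_{2m-2}^{\alpha+1,q+1}$ is positive and dominates the critical point $x=0$, so that it indeed lies in the topmost Rolle interval $]\xi_{m-1},\xi_m[$ rather than lower down.
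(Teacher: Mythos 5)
Your argument is correct and follows essentially the same route as the paper: both use the derivative identity $\frac{d}{dx}P_{2n}^{\alpha,q}(x)=2nxP_{2n-2}^{\alpha+1,q+1}(x)$ together with Rolle's theorem to place the largest zero of $P_{2n-2}^{\alpha+1,q+1}$ strictly below that of $P_{2n}^{\alpha,q}$, and then chain the resulting one-step inequalities. Your version is merely more explicit about the bookkeeping (simplicity of the zeros, the extra critical point at $x=0$, and the degree condition $m\ge 2$) than the paper's proof.
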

\begin{proof}
To prove \eqref{relazero} we need to use
\eqref{zeros2}:
$$
\frac{d}{dx}P_{2n}^{\alpha,q}(x)=2nxP_{2n-2}^{
\alpha+1,q+1}(x),\qquad n\geq0.
$$\\
We know that $P_{2n}^{\alpha,q}(x)$ and
$P_{2n-2}^{\alpha+1,q+1}(x)$ have $n$ and
$(n-1)$ zeros in $]0,1[$ respectively and between
two consecutive zeros of $P_{2n}^{\alpha,q}(x)$
we find, exactly, one zero of $P_{2n-2}^{\alpha+1,
q+1}(x)$ then the largest zero of $P_{2n-2}^{
\alpha+1,q+1}(x)$ is located between two zeros of
$P_{2n}^{\alpha,q}(x)$ thus
$$
x_{2n-2,2n-2}^{\alpha+1,q+1}<x_{2n,2n}^{\alpha,q},
$$
an analog idea leads to
$$
x_{2n-4,2n-4}^{\alpha+2,q+2}<x_{2n-2,2n-2}^{
\alpha+1,q+1},$$
and so on.
Then, we can write
\begin{equation*}
\dots<x_{2n-2j,2n-2j}^{\alpha+j,q+j}<\dots<x_{2n-2i,2n-2i}^{\alpha+i,q+i}
<\dots<x_{2n-2,2n-2}^{\alpha+1,q+1}<x_{2n,2n}^{\alpha,q}.
\end{equation*}
Hence the result holds.
\end{proof}
\begin{rem}
Using Proposition 3.4. and the relation
\begin{equation*}
x_{2n+1,m}^{\alpha+k,q+l}=x_{2n,m-1}^{\alpha+k+1,
q+l};\qquad k, l\in\mathbb{N}, 2\leq m\leq 2n+1,
\end{equation*}
with $x_{2n+1,1}^{\alpha+k,q+l}=-1$, we obtain
\begin{equation*}
x_{2n-2j+1,2n-2j+1}^{\alpha+j-1,q+j}<x_{2n-2i+1,
2n-2i+1}^{\alpha+i-1,q+i},\qquad 0\leq i<j\leq n.
\end{equation*}
\end{rem}

\noindent{\bf Acknowledgment.} The authors want
to thank Prof. F. Marcellan and Prof. H. Stahl
for their discussions and remarks which helped
us to improve the representation of this paper.

\end{document}